\newcommand{\al}{\alpha}
\newcommand{\be}{\beta}
\newcommand{\ep}{\epsilon}
\newcommand{\ddc}{d d^{c}}
\theoremstyle{plain}
\newtheorem{theorem}{Theorem}[section]
\newtheorem{lemma}[theorem]{Lemma}
\theoremstyle{definition}
\newtheorem{rema}[theorem]{Remark}
\numberwithin{equation}{section}
\title[A PDE on a torus]{A fully nonlinear ``generalised Monge-Amp\`ere" PDE on a torus}
\author[Pingali]{Vamsi P. Pingali}
\address{Department of Mathematics\\
412 Krieger Hall, Johns Hopkins University,\\ Baltimore, MD 21218, USA}
\email{vpingali@math.jhu.edu}
\begin{document}
\maketitle

\begin{abstract}
We prove an existence result for a ``generalised" Monge-Amp\`ere equation introduced in \cite{GenMA} under some assumptions on a flat complex 3-torus. As an application we prove the existence of Chern connections on certain kinds of holomorphic vector bundles on complex 3-tori whose top Chern character forms are given representatives.
\end{abstract}

\section{Introduction}
\indent The complex Monge-Amp\`ere equation on a K\"ahler manifold was introduced by Calabi \cite{Calabi}, and was solved by Aubin \cite{Aub} and Yau \cite{Yau}. Since then other such fully nonlinear equations were studied, namely, the Hessian and inverse Hessian equations \cite{Hess1, Hess2, InvHess}. The inverse Hessian equations were introduced by X.X. Chen \cite{XX} in an attempt to find a lower bound on the Mabuchi energy. Actually, in \cite{XX} Chen conjectured that a fairly general fully nonlinear ``Monge-Amp\`ere" type PDE has a solution. Roughly speaking, instead of requiring the determinant of the complex Hessian of a function to be prescribed, it requires a combination of the symmetric polynomials of the Hessian to be given. A real version of such an equation was studied by Krylov \cite{Kryl} and a general existence result was proven by reducing it to a Bellman equation. In view of these developments a ``generalised Monge-Amp\`ere" equation was introduced in \cite{GenMA} and a few local ``toy models" were studied. As expected, the equation is quite challenging. The main problem is to find techniques to prove \emph{a priori} estimates in order to use the method of continuity to solve the equation. In this paper we study this equation on a flat complex torus wherein curvature issues do not play a role. The aim of this basic  example is to give insight into studying this equation in a more general setting. We prove an existence result (theorem \ref{Main}) in this paper.\\
\indent A small geometric application of this result is also provided - Given a $(k,k)$ form $\eta$ representing the $k$th Chern character class $[tr((\Theta)^k)]$ of a vector bundle on a compact complex manifold, it is very natural to ask whether there is a metric whose induced Chern connection realises $tr((\Theta)^k)=\eta$. As phrased this question seems almost intractable. It is not even obvious as to whether there is \emph{any} connection satisfying this requirement, leave aside a Chern connection. Work along these lines was done by Datta in \cite{Datta} using the h-principle. Therefore, it is more reasonable to ask whether equality can be realised for the top Chern character form. To restrict ourselves further we ask whether any given metric $h_0$ may be conformally deformed to $h_0 e^{-\phi}$ so as to satisfy a fully nonlinear PDE of the type treated in \cite{GenMA}. Admittedly, the result we have in this direction (theorem \ref{ChernWeil}) imposes quite a few restrictive assumptions on the type of vector bundles involved. However, the goal is to simply introduce the problem and solve it in a basic case to highlight the difficulties involved.

\section{Summary of results}
We prove an existence and uniqueness theorem for a ``generalised" Monge-Amp\`ere type equation \cite{GenMA} on a flat, complex 3-Torus. In whatever follows $\ddc = \sqrt{-1} \partial \bar{\partial}$ and $\omega _{f} = \omega + \ddc f$.
\begin{theorem}
Let $(X,\omega = \sqrt{-1}\omega _{i\bar{j}} dz^i \wedge d\bar{z}^j)$ be a flat, K\"ahler complex 3-torus (i.e. the $\omega _{i\bar{j}}$ are constants) $\frac{\mathbb{C}^3}{\Lambda}$ and $\alpha \geq \tilde{\ep} \omega \wedge \omega $ ($\tilde{\ep}>0$) be a smooth harmonic (i.e. constant coefficient) $(2,2)$ form on $X$ satisfying $\omega ^3 - \al \wedge \omega >0$. The following equation has a unique smooth solution $\phi$ satisfying $ 3(\omega + \ddc \phi)^2 - \al > 0$ and $\displaystyle \int _X \phi = 0$:
\begin{gather}
T(\phi) = \omega _{\phi} ^3 - \al \wedge \omega _{\phi}= \eta = e^F (\omega ^3 - \omega \wedge \alpha) > 0
\label{maineq}
\end{gather}
where $\displaystyle \int _X \eta = \int _X (\omega ^3 - \al \wedge \omega)$ and by $\al \geq \tilde{\ep} \omega \wedge \omega$ we mean that $(\al-\tilde{\ep} \omega \wedge \omega) = \displaystyle \sum _i \phi_i \wedge \bar{\phi}_i \wedge \Phi _i \wedge \bar{\Phi}_i$ for $(1,0)$-forms $\phi _i$ and $\Phi _i$.
\label{Main}
\end{theorem}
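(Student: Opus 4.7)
The plan is to apply the method of continuity, as the author himself indicates in the introduction. Consider the family
\begin{gather*}
T(\phi_t) = \omega_{\phi_t}^3 - \alpha \wedge \omega_{\phi_t} = e^{tF + c_t}(\omega^3 - \alpha \wedge \omega), \quad t \in [0,1],
\end{gather*}
where $c_t$ is the unique constant making the two sides have equal integrals over $X$. The left-hand integral is independent of $\phi_t$ since $\alpha$ and $\omega$ are closed and $X$ is compact, so this is consistent. Set $\phi_0 \equiv 0$, $c_0 = 0$, and let $S \subset [0,1]$ be the set of $t$ admitting a smooth solution $\phi_t$ with $\int_X \phi_t = 0$ and $3\omega_{\phi_t}^2 - \alpha > 0$. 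It then suffices to show that $S$ is non-empty, open and closed.

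For openness, the linearisation of $T$ at $\phi$ in direction $\psi$ is
\begin{gather*}
DT(\phi)(\psi) = (3\omega_\phi^2 - \alpha) \wedge dd^c \psi,
\end{gather*}
which, under the cone condition, is a uniformly elliptic second-order linear operator on $\psi$. Restricted to Hölder spaces of mean-zero functions it is Fredholm of index zero, and its kernel is trivial by the maximum principle applied to its divergence form. The inverse function theorem then yields openness.

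The main obstacle is closedness, i.e.\ a priori $C^k$ estimates uniform in $t$. I would carry these out in the standard Yau order. A $C^0$ bound on $\phi_t$ would come from Moser iteration on $|\phi_t|^p$ (using the normalisation $\int_X \phi_t = 0$ and integration by parts against $\omega^3 - \alpha \wedge \omega$); an ABP-type argument is an alternative, cleanly available because the torus is flat. The $C^2$ estimate is the crux. Applying $DT(\phi)$ to a trial function such as $\log(n + \Delta \phi) + A|\nabla \phi|^2 - B\phi$ and commuting derivatives through the equation, one obtains (on a flat torus, so that curvature terms of $\omega$ drop out entirely) an estimate of the form $\Delta \phi \leq C(1 + \sup|\nabla \phi|^2)$ at a maximum point, where $C$ depends on the ellipticity constants. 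The lower bound $\alpha \geq \tilde{\epsilon}\,\omega \wedge \omega$ is essential here: it provides a uniform lower bound on the ``Monge-Ampère part'' of the linearised symbol in terms of $\tr_\omega \omega_\phi$ from below, preventing degeneration of ellipticity. A gradient bound is then obtained by a blow-up argument or by plugging a well-chosen auxiliary function into the linearised operator. With $\|\phi\|_{C^2}$ in hand, ellipticity is uniform, and a $C^{2,\alpha}$ bound follows from Evans-Krylov upon verifying the concavity of $T$ (viewed as a function of the complex Hessian) on the admissible cone; standard Schauder bootstrapping delivers smoothness.

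Uniqueness is addressed by expanding, for two admissible solutions $\phi_1,\phi_2$ and $\psi = \phi_1 - \phi_2$,
\begin{gather*}
0 = T(\phi_1) - T(\phi_2) = \bigl[\omega_{\phi_1}^2 + \omega_{\phi_1}\wedge\omega_{\phi_2} + \omega_{\phi_2}^2 - \alpha\bigr] \wedge dd^c \psi;
\end{gather*}
the bracketed $(2,2)$-form is strictly positive (a mild strengthening of the individual cone hypotheses, which one checks by weak positivity of the mixed term $\omega_{\phi_1}\wedge \omega_{\phi_2}$), so the equation is linear elliptic and the maximum principle together with $\int_X \psi = 0$ forces $\psi \equiv 0$.
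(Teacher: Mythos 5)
Your continuity-method skeleton (same path, openness via the linearisation $(3\omega_\phi^2-\alpha)\wedge \ddc\psi$, uniqueness via the factorisation $T(\phi_1)-T(\phi_2)=\ddc\psi\wedge[\omega_{\phi_1}^2+\omega_{\phi_1}\wedge\omega_{\phi_2}+\omega_{\phi_2}^2-\alpha]$) matches the paper's strategy, which delegates openness, uniqueness and the $C^0$ bound to Theorem 2.1 of the cited generalised Monge--Amp\`ere paper after verifying its hypotheses with a convexity lemma. But already here one of your claims is under-justified: strict positivity of the bracketed $(2,2)$-form does not follow from ``weak positivity of the mixed term''. From $3\omega_{\phi_i}^2-\alpha>0$ and $\omega_{\phi_1}\wedge\omega_{\phi_2}\geq 0$ you only get a lower bound of roughly $\tfrac{2}{3}\alpha$; you need either the convexity of $\gamma\mapsto\frac{\alpha\wedge\gamma}{\gamma^3}$ (Lemmas \ref{EK} and \ref{rem1}, which give $\int_0^1(3(t\omega_{\phi_1}+(1-t)\omega_{\phi_2})^2-\alpha)\,dt>0$, and this integral is exactly your bracket) or an Alexandrov--Minkowski mixed-determinant inequality to control $\omega_{\phi_1}\wedge\omega_{\phi_2}$ from below by $\tfrac13\alpha$. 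That is fixable, and the same positivity is what any Moser-iteration $C^0$ argument would hinge on, so your $C^0$ sketch is plausible but rests on the very point you glossed over.

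The genuine gap is the second-order and gradient estimates, which are the actual content of the paper. You assert that the standard test function $\log(n+\Delta\phi)+A|\nabla\phi|^2-B\phi$ yields $\Delta\phi\le C(1+\sup|\nabla\phi|^2)$ with $C$ ``depending on the ellipticity constants'' --- but for the operator $\det(\ddc u)-\mathrm{tr}(A\,\ddc u)=f$ the ellipticity of the linearised coefficient matrix $L=\det(\ddc u)(\ddc u)^{-1}-A$ is precisely what threatens to degenerate as $\Delta u\to\infty$, and neither the $\sigma_k$-concavity structure exploited by Hou--Ma--Wu nor the Dinew--Kolodziej blow-up/Liouville machinery for the gradient is available off the shelf for this mixed determinant-minus-trace operator. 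The paper closes this by equation-specific devices you do not supply: the dichotomy Lemma \ref{inter} (if $\Delta u\to\infty$ then $\sum_i 1/u_{i\bar i}>2+\epsilon$, which uses $n=3$ and $\alpha\ge\tilde\epsilon\,\omega\wedge\omega$ through $A_{ii}<1-\tilde\epsilon$), a Blocki-type gradient estimate run directly on this operator using that lemma (done \emph{before} the second-order bound, not after), and a maximum-principle argument applied to the non-obvious quantity $g=\frac{\alpha\wedge\omega_\phi}{\omega^3}-\phi$, which bounds $\mathrm{tr}(A\,\ddc u)$ and hence $\Delta\phi$. Finally, Evans--Krylov cannot be applied to $T$ itself, since $\det$ is not concave on the relevant cone; the paper first rewrites the equation as $-1=-\frac{\eta}{\omega_\phi^3}-\frac{\alpha\wedge\omega_\phi}{\omega_\phi^3}$ and invokes the convexity in Lemma \ref{EK}, whereas ``verifying the concavity of $T$ viewed as a function of the complex Hessian'' as you propose would simply fail. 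As written, the proposal outlines the easy scaffolding and waves at the hard estimates with methods that are not known to apply to this equation.
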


\begin{rema}
Let $\chi$ be a harmonic (with respect to $\omega$) K\"ahler form. Define $\tilde{\omega}$ as $\tilde{\omega} = \omega + \frac{\chi}{3}$ and assume that $3\tilde{\omega}^2 > 2 \chi \wedge \tilde{\omega}$.  As an interesting consequence one can see that the equation $\tilde{\omega} _{\phi} ^3 = \chi \wedge \tilde{\omega}_{\phi} ^2$ has a unique solution satisfying $\tilde{\omega}_{\phi}>0$ and $3\tilde{\omega} _{\phi}^2 > 2 \chi \wedge \tilde{\omega}_{\phi}$. This recovers existence for an inverse Hessian equation in this very special case. This also shows that solving the equation in general would give an alternate proof of existence for inverse Hessian equations , i.e., the results in \cite{InvHess}.
\end{rema}
A consequence of theorem \ref{Main} and the Calabi conjecture is the following theorem that deals with the existence of a Chern connection with a prescribed top Chern form.
\begin{theorem}
Let $X$ be a compact complex manifold of dimension $n$ and $(V,h_0)$ be a rank $k$ hermitian holomorphic vector bundle over $X$. In the following two cases, given an $(n,n)$ form $\eta$ representing the top Chern character class of $V$, there exists a smooth metric $h=h_0 e^{-2\pi\phi}$  such that its top Chern-Weil form of the Chern character class is $\eta$:
\begin{enumerate}
\item $X$ is a surface, i.e. $n=2$, $\mathrm{tr}(\Theta _0) >0$, and $(\mathrm{tr}(\Theta _0))^2  + k(\eta - \mathrm{tr}(\Theta _0 ^2)) >0$.
\item $X$ is a complex $3$-torus, $k \omega = \mathrm{tr}(\Theta _0)$ is a harmonic positive form, $\alpha = \frac{3(\mathrm{tr} (\Theta _0))^2}{k^2}-\frac{3\mathrm{tr}(\Theta _0 ^2)}{k} >0$ is harmonic, $-2(\mathrm{tr}(\Theta_0))^3+3k \mathrm{tr}(\Theta_0)\wedge \mathrm{tr}(\Theta_0^2)>0$, and $k^2 (\eta - \mathrm{tr}(\Theta _0 ^3) - 2(\mathrm{tr}(\Theta _0))^3+3k\mathrm{tr}(\Theta _0) \wedge \mathrm{tr}(\Theta _0 ^2)>0$
\end{enumerate}
where $\Theta _0 = \frac{\sqrt{-1}}{2\pi} F_0$ is the (normalised) curvature of the Chern connection associated to $h_0$.
\label{ChernWeil}
\end{theorem}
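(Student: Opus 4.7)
My plan is to reduce both cases to equations we already know how to solve: case (1) to the classical complex Monge--Amp\`ere equation, and case (2) to Theorem~\ref{Main}. The key observation is that under a conformal change $h = h_0 e^{-2\pi\phi}$ with $\phi$ a scalar function, the Chern curvature transforms as $\Theta = \Theta_0 + \ddc\phi \cdot I_k$, because $\ddc\phi$ is a scalar $(1,1)$-form and hence commutes with $\Theta_0$. This reduces the endomorphism-valued equation $\tr(\Theta^n) = \eta$ to a scalar PDE for $\phi$.

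For case (1), expanding
\[
\tr\bigl((\Theta_0 + \ddc\phi \cdot I_k)^2\bigr) = \tr(\Theta_0^2) + 2\tr(\Theta_0)\,\ddc\phi + k(\ddc\phi)^2
\]
and setting $\omega := \tfrac{1}{k}\tr(\Theta_0)$, which is K\"ahler by the positivity hypothesis and the Bianchi identity, I would complete the square to rewrite $\tr(\Theta^2) = \eta$ as
\[
(\omega + \ddc\phi)^2 = \omega^2 + \tfrac{1}{k}\bigl(\eta - \tr(\Theta_0^2)\bigr).
\]
The right-hand side is pointwise positive precisely by the assumption $(\tr(\Theta_0))^2 + k(\eta - \tr(\Theta_0^2)) > 0$, and integrates to $\int_X \omega^2$ since $\eta$ and $\tr(\Theta_0^2)$ represent the same class. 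Yau's solution of the Calabi conjecture then furnishes a smooth $\phi$.

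For case (2), the analogous expansion gives
\[
\tr\bigl((\Theta_0 + \ddc\phi\cdot I_k)^3\bigr) = \tr(\Theta_0^3) + 3\tr(\Theta_0^2)\,\ddc\phi + 3\tr(\Theta_0)\,(\ddc\phi)^2 + k(\ddc\phi)^3.
\]
Setting $\omega := \tfrac{1}{k}\tr(\Theta_0)$ and $\alpha := 3\omega^2 - \tfrac{3}{k}\tr(\Theta_0^2)$, a direct algebraic manipulation rewrites $\tr(\Theta^3) = \eta$ in the form
\[
\omega_\phi^3 - \alpha\wedge\omega_\phi = (\omega^3 - \alpha\wedge\omega) + \tfrac{1}{k}\bigl(\eta - \tr(\Theta_0^3)\bigr).
\]
I would then check each hypothesis of Theorem~\ref{Main} from the assumptions of case (2): harmonicity of $\alpha$ is given; the positivity $\omega^3 - \alpha\wedge\omega > 0$ is exactly $-2(\tr(\Theta_0))^3 + 3k\,\tr(\Theta_0)\wedge\tr(\Theta_0^2) > 0$ rescaled by $k^{-3}$; the right-hand side above is strictly positive by the final inequality assumed in case (2); the sum-of-squares bound $\alpha \geq \tilde\epsilon\,\omega^2$ follows from $\alpha > 0$ because we are dealing with constant-coefficient forms on the torus (and the positive cone is open in the finite-dimensional space of such forms); and the integral identity $\int_X \eta' = \int_X (\omega^3 - \alpha\wedge\omega)$ is automatic because $\eta$ and $\tr(\Theta_0^3)$ are cohomologous. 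Theorem~\ref{Main} then produces the required $\phi$, and $h_0 e^{-2\pi\phi}$ is the desired metric.

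The main obstacle is conceptual rather than analytic: guessing the right choices of $\omega$ and $\alpha$ so that the Chern--Weil equation lands precisely in the shape dictated by Theorem~\ref{Main}, and then matching each of the hypotheses of case (2) with the corresponding hypothesis of Theorem~\ref{Main}. Once those identifications are in place, all the heavy analytic lifting is already packaged in Theorem~\ref{Main} and in Yau's theorem.
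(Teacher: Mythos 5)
Your proposal is correct and is essentially the paper's own argument: the conformal change gives $\Theta = \Theta_0 + \ddc\phi\cdot I_k$, case (1) is completed to the square $(\omega+\ddc\phi)^2 = \omega^2 + \tfrac{1}{k}(\eta-\mathrm{tr}(\Theta_0^2))$ and handed to Aubin--Yau, and case (2) is rewritten with $\omega = \tfrac{1}{k}\mathrm{tr}(\Theta_0)$, $\alpha = \tfrac{3(\mathrm{tr}\Theta_0)^2}{k^2}-\tfrac{3\mathrm{tr}(\Theta_0^2)}{k}$ as exactly the generalised Monge--Amp\`ere equation of Theorem~\ref{Main}. Your extra verifications (positivity of the right-hand sides, the integral normalisation, and strong positivity of the constant-coefficient $\alpha$) only make explicit what the paper leaves implicit.
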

The hypotheses of theorem \ref{ChernWeil} require some discussion. As a warm-up example, let us consider the question for a line bundle, i.e., given a metric $h_0$ on a hermitian holomorphic line bundle $L$ on a complex $n$-fold with the curvature form denoted as $\Theta _{h_0}$, can we find a new metric $h=e^{-\phi} h_0$ such that the top Chern character form $(\frac{\sqrt{-1}}{2\pi}(\Theta_{h_0}+\ddc \phi)) ^n = \eta$ where $[\eta] = \mathrm{ch}_n (L)$ ? This is just the ``usual" Monge-Amp\`ere equation. To prove existence, the commonly made assumption is $\Theta _{h_0} >0$. So it is not at all surprising (and almost inevitable) that a ``generalised" version of such an equation would warrant more positivity assumptions, some of which might seem a little less geometric than desired. \\
\indent Nevertheless, here are a few examples (certainly not exhaustive) that satisfy the hypotheses :
\begin{enumerate}
\item $X$ is any compact complex surface, $(V,h_0)$ is any rank $k$ hermitian holomorphic vector bundle over $X$ such that $\mathrm{tr}(\Theta _0)>0$ and $\eta = (\mathrm{tr}(\Theta _0)) ^2 + \epsilon g \mathrm{tr}(\Theta _0)$ where $\int_X g \mathrm{tr}(\Theta _0) = 0$ and $\epsilon <<1$.
\item $X$ is the complex $3$-torus with the standard lattice $\mathbb{Z}\oplus \mathbb{Z}\oplus \mathbb{Z}$. Choose three line bundles $(L_1,h_1), (L_2,h_2), (L_3,h_3)$ so that their Chern forms are $\omega _1 = \sqrt{-1} \sum dz^i \wedge d\bar{z}^i$, $\omega _2 = \sqrt{-1}(3dz^2 \wedge d\bar{z}^2 + dz^3 \wedge d\bar{z}^3)$, $\omega _3 = 2dz^3 \wedge d\bar{z}^3$. Take $(V,h_0)$ to be their direct sum and $\eta = \mathrm{tr}(\Theta _0 ^3) + \epsilon g$ where $\epsilon <<1$ and $\int g=0$.
\end{enumerate}

\section{Proofs of the Theorems}
\indent We first prove a useful lemma :
\begin{lemma}
Let $X$ be a K\"ahler $3$-manifold. If $\gamma$ is a non-negative real $(1,1)$ form and $\beta$ be a strongly strictly positive real $(2,2)$ form (hence $*\beta >0$ for the Hodge star of any K\"ahler metric) such that $\gamma ^3 - \beta \wedge \gamma >0$ then $3\gamma ^2 - \beta >0$ and $\gamma >0$.
\label{useful}
\end{lemma}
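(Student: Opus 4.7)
The plan is to argue pointwise via linear algebra at a fixed $p\in X$, using that a real $(1,1)$ form can always be diagonalised in suitable holomorphic coordinates at a point, and that two real $(1,1)$ forms (one of them positive definite) can be simultaneously diagonalised.

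First I would verify strict positivity of $\gamma$ by contradiction. If some eigenvalue of $\gamma$ vanished at $p$, then $\gamma^3(p)=0$. But $\beta$ is strongly positive and $\gamma$ is a non-negative $(1,1)$ form, so $\beta\wedge\gamma$ is a non-negative $(3,3)$ form, giving $\gamma^3-\beta\wedge\gamma\le 0$ at $p$, contradicting the hypothesis.

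For the statement $3\gamma^2-\beta>0$, I would use that for real $(2,2)$ forms on a complex $3$-fold, positivity is equivalent to $\rho\wedge\alpha>0$ for every nonzero (strongly) positive $(1,1)$ form $\alpha$. Given such an $\alpha$, choose coordinates at $p$ simultaneously diagonalising $\gamma=\sum_j\lambda_j\eta_j$ (with $\lambda_j>0$) and $\alpha=\sum_j\mu_j\eta_j$ (with $\mu_j\ge 0$, not all zero), where $\eta_j=i\,dz^j\wedge d\bar z^j$. Write $\beta=\sum_{j<k}b_{jk}\,\eta_j\wedge\eta_k+\tilde\beta$, where the remaining monomials $dz^J\wedge d\bar z^K$ in $\tilde\beta$ have $J\neq K$ and $|J|=|K|=2$, hence $J\cup K=\{1,2,3\}$, so each of them wedges to zero against every $\eta_\ell$. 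Therefore both $\beta\wedge\gamma$ and $\beta\wedge\alpha$ see only the diagonal coefficients $b_{jk}$. The strong positivity of $\beta$ forces $b_{jk}\ge 0$: expanding $\beta=\sum_s i\phi_s\wedge\bar\phi_s\wedge i\Phi_s\wedge\bar\Phi_s$ expresses each $b_{jk}$ as a sum of squared moduli of the $2\times 2$ minors of the coefficient vectors of $\phi_s$ and $\Phi_s$.

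Assembling these expansions, the hypothesis $\gamma^3-\beta\wedge\gamma>0$ reads
\[ 6\lambda_1\lambda_2\lambda_3 > b_{23}\lambda_1+b_{13}\lambda_2+b_{12}\lambda_3. \]
Dividing by $\lambda_1\lambda_2\lambda_3>0$ and using the term-wise non-negativity forces $b_{jk}<6\lambda_j\lambda_k$ for every pair. The parallel computation gives
\[ (3\gamma^2-\beta)\wedge\alpha = \bigl[(6\lambda_2\lambda_3-b_{23})\mu_1 + (6\lambda_1\lambda_3-b_{13})\mu_2 + (6\lambda_1\lambda_2-b_{12})\mu_3\bigr]\,dV, \]
which is strictly positive. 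The only non-routine step is the index-support observation that ensures the ``off-diagonal'' components of $\beta$ are invisible to wedging with diagonal $(1,1)$ forms; once that is in hand, everything reduces to an elementary positive-coefficient inequality in the simultaneously diagonalising coordinates.
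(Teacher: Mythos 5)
Your argument is correct. It is, however, packaged differently from the paper's proof, so a comparison is worthwhile. The paper runs the whole argument through the Hodge star of $\gamma$: since $\beta$ is strongly positive, $*\beta$ is a positive $(1,1)$-form and $\beta\wedge\gamma=*\beta\wedge\tfrac{\gamma^2}{2}$; choosing coordinates at $p$ in which $*\beta=\sqrt{-1}\sum dz^i\wedge d\bar z^i$ and $\gamma$ is diagonal with eigenvalues $\lambda_i$, the hypothesis becomes $6\lambda_1\lambda_2\lambda_3>\sum_{i<j}\lambda_i\lambda_j$, which forces $6\lambda_i>1$ for each $i$, i.e.\ $6\gamma-*\beta>0$, and applying $*$ once more gives $3\gamma^2-\beta>0$. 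You avoid the Hodge star and never diagonalise $\beta$ at all: you test $3\gamma^2-\beta$ against an arbitrary nonzero positive $(1,1)$-form $\alpha$ (legitimate, since in bidegree $(n-1,n-1)$ the positivity notions coincide and are detected by such wedges), simultaneously diagonalise only $\gamma$ and $\alpha$, and use the index-support observation that the off-diagonal monomials of $\beta$ are annihilated by wedging with diagonal $(1,1)$-forms, so only the coefficients $b_{jk}\ge 0$ survive; the arithmetic core, $b_{jk}<6\lambda_j\lambda_k$ term by term from the hypothesis, is exactly the paper's $6\lambda_i>1$ after its normalisation. The paper's route is shorter and reduces everything to a $(1,1)$-level inequality that is then starred back, but it implicitly uses that $*_\gamma$ carries strongly positive $(2,2)$-forms to positive $(1,1)$-forms and positive $(1,1)$-forms to positive $(2,2)$-forms; your route is more elementary and self-contained, makes explicit which notion of positivity of $3\gamma^2-\beta$ is being established, and needs only the non-negativity of the diagonal coefficients of $\beta$ (your minor expansion, or simply $\beta\wedge\eta_\ell\ge 0$). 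Your treatment of $\gamma>0$ (via $\beta\wedge\gamma\ge 0$ forcing $\gamma^3>0$) is the same as the paper's, and you correctly establish it before dividing by $\lambda_1\lambda_2\lambda_3$.
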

\emph{Proof} : Since $\gamma ^3 >0$ it is clear that $\gamma >0$.  Let $*$ denote the Hodge star with respect to $\gamma$. Notice that $\beta \wedge \gamma = *\beta \wedge \frac{\gamma ^2}{2}$. Since we are dealing with top forms, we may divide by $(*\beta) ^3$ to get $\frac{\gamma ^3}{(*\beta)^3} - \frac{*\beta \wedge  \gamma ^2}{2(*\beta)^3} >0$. At a point $p$, choose coordinates so that the strictly positive form $*\beta$ is $\sqrt{-1}\sum dz^i \wedge d\bar{z}^i$ and $\gamma$ is diagonal with eigenvalues $\lambda _i$. Then at $p$, $6\lambda _1 \lambda _2 \lambda _3 - (\sum _{i<j} \lambda _i \lambda _j) >0$ thus implying that $6\lambda _i >1$. This means $6\gamma - *\beta >0$. Applying $*$ we see that $3\gamma ^2 - \beta >0$.   \\
\newline
\indent We need another lemma
\begin{lemma} 
Let $X$ be a K\"ahler $3$-manifold. If $\gamma$ is a positive real $(1,1)$ form, $\eta>0$ is a $(3,3)$ form, and $\beta$ be a strongly strictly positive real $(2,2)$ form, then the functions $\mathcal{F}:\gamma \rightarrow \frac{\beta \wedge \gamma}{\gamma ^3}$ and $\mathcal{G}:\gamma \rightarrow \frac{\eta}{\gamma^3}$ are convex.
\label{EK}
\end{lemma}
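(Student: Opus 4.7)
The plan is to establish convexity pointwise. At a fixed point $p \in X$, choose coordinates $z^1, z^2, z^3$ on $T_p X$, set $\Omega = (\sqrt{-1})^3 dz^1 \wedge d\bar z^1 \wedge dz^2 \wedge d\bar z^2 \wedge dz^3 \wedge d\bar z^3$, and write $\gamma^3 = 6 (\det G)\,\Omega$ where $G = (\gamma_{i\bar j})$ is the Hermitian matrix of $\gamma = \sqrt{-1}\, \gamma_{i\bar j}\, dz^i \wedge d\bar z^j$; similarly write $\eta = c\,\Omega$ with $c > 0$. Using the non-degenerate wedge pairing $\Lambda^{2,2} \otimes \Lambda^{1,1} \to \Lambda^{3,3}$, I would express $\beta \wedge \gamma = \operatorname{tr}(AG)\,\Omega$ for a Hermitian matrix $A$. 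The key input is that strong strict positivity of $\beta$ (equivalent to $*\beta > 0$) forces $A$ to be positive definite. It then suffices to prove that
\[
\mathcal{G}(G) = \frac{c}{6 \det G}, \qquad \mathcal{F}(G) = \frac{\operatorname{tr}(AG)}{6 \det G}
\]
are convex on the cone of positive Hermitian $3\times 3$ matrices.

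For $\mathcal{G}$ the claim is immediate: $G \mapsto -\log \det G$ is classically convex, and since $e^x$ is convex and increasing, $(\det G)^{-1} = e^{-\log \det G}$ is convex as well.

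For $\mathcal{F}$ the strategy is a direct second-variation calculation. First, the linear change of variables $\tilde G = M^{-1} G M^{-*}$ with $MM^* = G_0$ reduces to a reference point $G_0 = I$ without disturbing positivity of $A$ or the convexity question. Along the line $G_t = I + tH$ with $H$ Hermitian, the expansion $\det(I + tH) = 1 + t\,\operatorname{tr}(H) + \tfrac{t^2}{2}\bigl[(\operatorname{tr} H)^2 - \operatorname{tr}(H^2)\bigr] + O(t^3)$ combined with the quotient rule yields
\[
6\,\mathcal{F}''(0) = (\operatorname{tr} A)\bigl[(\operatorname{tr} H)^2 + \operatorname{tr}(H^2)\bigr] - 2(\operatorname{tr} H)(\operatorname{tr}(AH)).
\]
To see this is non-negative, I would apply AM--GM to get $2(\operatorname{tr} H)(\operatorname{tr}(AH)) \leq (\operatorname{tr} A)(\operatorname{tr} H)^2 + (\operatorname{tr}(AH))^2/(\operatorname{tr} A)$ and Cauchy--Schwarz for the Hilbert--Schmidt inner product to get $(\operatorname{tr}(AH))^2 \leq \operatorname{tr}(A^2)\operatorname{tr}(H^2)$; combining the two reduces the problem to $(\operatorname{tr} A)^2 \geq \operatorname{tr}(A^2)$, which follows from $A > 0$: writing its eigenvalues as $a_i > 0$, one has $(\sum a_i)^2 - \sum a_i^2 = 2\sum_{i<j} a_i a_j \geq 0$.

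The main (and really rather modest) obstacle is identifying the correct pointwise matrix model and recognising that strong strict positivity of $\beta$ is precisely the hypothesis needed to make $A$ positive definite, after which the problem collapses to the elementary inequality $(\operatorname{tr} A)^2 \geq \operatorname{tr}(A^2)$; everything else is a textbook manipulation on the cone of positive Hermitian matrices.
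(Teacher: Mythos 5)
Your argument is correct. The pointwise reduction is the same as the paper's: at a point, $\gamma^3$, $\eta$, and $\beta\wedge\gamma$ become $6\det G$, a positive constant, and $\mathrm{tr}(AG)$ (times the volume form), with $A>0$ coming exactly from the strong strict positivity of $\beta$ (i.e. $*\beta>0$), and convexity in $\gamma$ is convexity in $G$ since $\gamma\mapsto G$ is linear. Where you diverge is in how the matrix convexity is established: the paper normalises $*\beta$ to the identity and simply cites Krylov for the convexity of $A\mapsto \mathrm{tr}(A)/\det(A)$ and $A\mapsto 1/\det(A)$ on the positive Hermitian cone, whereas you keep a general positive definite $A$ and prove the statement from scratch --- the reduction $\tilde G=M^{-1}GM^{-*}$, $MM^*=G_0$, is legitimate because it turns $\mathrm{tr}(AG)/\det G$ into $|\det M|^{-2}\,\mathrm{tr}(\tilde A\tilde G)/\det\tilde G$ with $\tilde A=M^*AM>0$, your second-variation formula $6\mathcal{F}''(0)=(\operatorname{tr}A)\bigl[(\operatorname{tr}H)^2+\operatorname{tr}(H^2)\bigr]-2(\operatorname{tr}H)\operatorname{tr}(AH)$ is correct, and the AM--GM/Cauchy--Schwarz chain together with $(\operatorname{tr}A)^2\geq\operatorname{tr}(A^2)$ for $A>0$ does give nonnegativity (the composition argument $1/\det G=e^{-\log\det G}$ for $\mathcal{G}$ is also fine). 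What your route buys is self-containedness and a slightly more general statement (no normalisation of $\beta$ needed, and the argument works in any dimension); what the paper's route buys is brevity, outsourcing the only nontrivial inequality to \cite{Kryl}. The one step you state without proof --- that strong strict positivity of $\beta$ makes $A$ positive definite --- is used at the same level of detail in the paper (diagonalising $*\beta$ with positive coefficients), so this is not a gap.
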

\emph{Proof} : Fix a K\"ahler form $\omega$ for $X$ and let $*$ be its Hodge star. Choose coordinates so that $\omega = \sqrt{-1}\sum dz^i \wedge d\bar{z}^i$ at a point $p$. By a linear change of coordinates $*\beta$ may be diagonalised at $p$. Hence $\beta = -b_3 dz^1 \wedge d\bar{z}^1 \wedge dz^2 \wedge d\bar{z}^2 -  b_2 dz^3 \wedge d\bar{z}^3 \wedge dz^1 \wedge d\bar{z}^1 -  b_1 dz^2 \wedge d\bar{z}^2 \wedge dz^3 \wedge d\bar{z}^3$ at $p$. By scaling $z_i$ appropriately we may assume that $b_i=1$. At $p$ the function $\mathcal{F}$ is $A \rightarrow \frac{\mathrm{tr}(A)}{6\det(A)}$ where $A$ is a positive hermitian matrix. The fact that this and $G(A) = \frac{1}{\det(A)}$ are convex is proven in \cite{Kryl}. ( Notice that $\mathcal{G}(A) = K G(A)$ for some positive constant $K$.) \\

It is easy to see that the set $\mathcal{S}$ of $\gamma>0$ in lemma \ref{useful} satisfying $\gamma ^3 - \beta \wedge \gamma >0$ is a convex open set. In fact, a stronger statement holds. 
\begin{lemma}
Let $\gamma _1$, $\gamma _2$ lie in $\mathcal{S}$ and $\gamma _t = t\gamma _1 + (1-t) \gamma _2$. Then $3\gamma_t ^2 - \beta > Ct \gamma _1 ^2$ where $C$ depends only on $\gamma _1$ and $\beta$.
\label{rem1}
\end{lemma}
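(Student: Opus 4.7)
The plan is to exploit the convexity of $\mathcal{F}(\gamma):=\frac{\beta\wedge\gamma}{\gamma^3}$ proved in Lemma \ref{EK}. Set $\delta:=1-\mathcal{F}(\gamma_1)$, which is strictly positive and depends only on $\gamma_1$ and $\beta$; since $\gamma_2\in\mathcal{S}$ we also have $\mathcal{F}(\gamma_2)\leq 1$. Convexity along the segment from $\gamma_2$ to $\gamma_1$ then gives
\[
\mathcal{F}(\gamma_t) \leq t(1-\delta)+(1-t) = 1-t\delta,
\]
which rearranges to the key estimate $\gamma_t^3-\beta\wedge\gamma_t\geq t\delta\,\gamma_t^3$. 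This is the quantitative improvement of the open condition $\gamma_t\in\mathcal{S}$ that powers everything else.

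Next, to convert this into a statement about $3\gamma_t^2-\beta$, I would apply Lemma \ref{useful} with $\beta$ replaced by the still strongly strictly positive form $\beta':=\beta+Ct\gamma_1^2$. Its conclusion is precisely the claimed inequality $3\gamma_t^2-\beta-Ct\gamma_1^2>0$, and its hypothesis $\gamma_t^3>\beta'\wedge\gamma_t$ reads $\gamma_t^3-\beta\wedge\gamma_t>Ct\,\gamma_1^2\wedge\gamma_t$. Combining with the previous display it suffices to choose $C>0$ (depending only on $\gamma_1,\beta$) such that
\[
C\cdot\gamma_1^2\wedge\gamma_t < \delta\,\gamma_t^3.
\]

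The bulk of the work is therefore the pointwise estimate that $\gamma_t^3/(\gamma_1^2\wedge\gamma_t)$ is bounded below uniformly in $\gamma_2\in\mathcal{S}$ and $t\in[0,1]$. Simultaneously diagonalising $\gamma_1$ (as the standard form) and $\gamma_2=\sqrt{-1}\sum_j\mu_j\,dz^j\wedge d\bar{z}^j$ at the point, $\gamma_t$ becomes diagonal with eigenvalues $\nu_j=t+(1-t)\mu_j$, and a direct computation reduces the ratio to $3\nu_1\nu_2\nu_3/(\nu_1+\nu_2+\nu_3)$. Using $\nu_j\geq\max(t,(1-t)\mu_j)$ together with the constraint $6\mu_1\mu_2\mu_3\geq b_{\min}(\mu_1+\mu_2+\mu_3)$ extracted from $\gamma_2\in\mathcal{S}$ (where $b_{\min}>0$ is the smallest eigenvalue of $*\beta$ with respect to $\gamma_1$), this ratio is seen to be bounded below by a positive constant depending only on $\gamma_1$ and $\beta$. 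The main obstacle is precisely this last estimate: checking that degenerations of $\gamma_2$ toward $\partial\mathcal{S}$ do not kill the lower bound on the ratio, for which one really has to spend the $\gamma_2\in\mathcal{S}$ constraint rather than just $\gamma_2>0$.
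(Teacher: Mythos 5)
Your argument is correct, and its first half coincides with the paper's: both use the convexity from Lemma \ref{EK} to get $\mathcal{F}(\gamma_t)\leq t\,\mathcal{F}(\gamma_1)+(1-t)\mathcal{F}(\gamma_2)\leq 1-t\delta$, i.e. $\gamma_t^3-\beta\wedge\gamma_t\geq t\delta\,\gamma_t^3$ (with $\delta$ taken, to be precise, as the positive infimum over $X$ of $1-\mathcal{F}(\gamma_1)$). The two proofs then diverge. The paper absorbs the gain \emph{multiplicatively}: it rewrites the inequality as $\gamma_t^3-\frac{\beta}{1-\tilde{C}t}\wedge\gamma_t>0$, applies Lemma \ref{useful} with the rescaled form $\beta/(1-\tilde{C}t)$, and concludes $3\gamma_t^2-\beta>\frac{\tilde{C}t}{1-\tilde{C}t}\beta\geq Ct\gamma_1^2$, where the last step only uses that the fixed strongly positive form $\beta$ dominates a multiple of $\gamma_1^2$; no eigenvalue analysis of $\gamma_t$ is needed. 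You perturb \emph{additively}, applying Lemma \ref{useful} to $\beta'=\beta+Ct\gamma_1^2$, which obliges you to prove the uniform pointwise bound $\gamma_t^3\geq c\,\gamma_1^2\wedge\gamma_t$ over all $\gamma_2\in\mathcal{S}$ and $t\in[0,1]$ --- exactly the step you flag as the main obstacle. That bound is in fact true, and the ingredients you list do close it: ordering $\mu_{(1)}\leq\mu_{(2)}\leq\mu_{(3)}$, one has $3\nu_1\nu_2\nu_3/(\nu_1+\nu_2+\nu_3)\geq \nu_{(1)}\nu_{(2)}\geq t^2+(1-t)^2\mu_{(1)}\mu_{(2)}$, while the constraint $6\mu_1\mu_2\mu_3\geq b_{\min}\sum_j\mu_j\geq b_{\min}\mu_{(3)}$ gives $\mu_{(1)}\mu_{(2)}\geq b_{\min}/6$, so the ratio is bounded below by $\min_{t}\bigl(t^2+(1-t)^2 b_{\min}/6\bigr)>0$, uniformly in the point by compactness; and, as you correctly observe, positivity of $\gamma_2$ alone would not suffice here. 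So your route works at the cost of this extra, genuinely necessary, estimate, whereas the paper's rescaling trick extracts the excess positivity directly from $\beta$ itself and is shorter; your version has the mild advantage of making explicit a quantitative lower bound on the two smallest eigenvalues of any element of $\mathcal{S}$ measured against $\gamma_1$.
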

\begin{proof}
Notice that 
\begin{gather}
\gamma _t ^3 - \beta \wedge \gamma_t = \gamma _t ^3 (1- \frac{\beta \wedge \gamma _t}{\gamma_ t ^3}) \nonumber \\
\geq \gamma _t ^3 \left(1- t\frac{\beta \wedge \gamma _1}{\gamma_ 1 ^3} - (1-t)\frac{\beta \wedge \gamma _2}{\gamma_ 2 ^3}\right) \label{in1}
\end{gather}
where the last inequality follows from lemma \ref{EK}. Since $\gamma _1$ and $\gamma _2$ lie in $\mathcal{S}$, 
\begin{gather}
\gamma _t ^3 \left(1- t\frac{\beta \wedge \gamma _1}{\gamma_ 1 ^3} - (1-t)\frac{\beta \wedge \gamma _2}{\gamma_ 2 ^3}\right) \geq t\gamma _t ^3 \left(1- \frac{\beta \wedge \gamma _1}{\gamma_ 1 ^3} \right) \nonumber \\
> \tilde{C}t\gamma _t ^3 \label{in2}
\end{gather}
where $\tilde{C}$ is a small positive constant depending only on $\gamma_1$ and $\beta$. Putting \ref{in1} and \ref{in2} together we have
\begin{gather}
\gamma _t ^3 - \frac{\beta}{1-\tilde{C}t} \wedge \gamma_t > 0 \nonumber 
\end{gather}
This implies (by using lemma \ref{useful}) that 
\begin{gather}
3\gamma _t ^2 - \frac{\beta}{1-\tilde{C}t} > 0 \nonumber \\
\Rightarrow 3\gamma _t ^2 - \beta > \frac{\tilde{C}t}{1-\tilde{C}t}\beta > Ct\gamma_1 \nonumber  
\end{gather}
\end{proof}
\subsection{Proof of theorem \ref{Main}}:  
We use the method of continuity. Consider the family of equations for $t$ in  $[0,1]$
\begin{gather}
(\omega + \ddc \phi_t)^3-\alpha \wedge (\omega +\ddc \phi_t) = \frac{e^{tF} \int _X (\omega ^3 - \alpha \wedge \omega)}{\int _X e^{tF} (\omega ^3 - \alpha \wedge \omega)}(\omega ^3 - \alpha \wedge \omega)
\label{conti}
\end{gather}
At $t=0$, $\phi=0$ is a solution. By lemma \ref{useful} ellipticity is preserved along the path. We verify that theorem $2.1$ of \cite{GenMA} applies here. Indeed, we notice that $T(\phi)-T(0)=\int _0 ^1 \frac{dT(t\phi)}{dt} dt = \ddc \phi \wedge \int _0 ^1 (3\omega _{t\phi} ^2 -\al) dt$ and that lemma \ref{rem1} (along with the substitution $\tilde{t}=1-t$ in the integral) implies that the conditions of theorem $2.1$ are satisfied. This proves that the set of $t$ for which solutions exist is open, solutions are unique and have an \emph{a priori} $C^0$ bound. To prove that it is closed we need $C^{2,\be}$ \emph{a priori} estimates (by Schauder theory this is enough to bootstrap the regularity). We proceed to find such estimates for $(\omega + \ddc \phi)^3 - \al \wedge (\omega +\ddc \phi) = f\omega ^3$. Locally $\omega =\sqrt{-1}\sum dz^i \wedge d\bar{z}^i$, $u = \sum \vert z \vert^2 + \phi$, and
\begin{gather}
\det(\ddc u) - tr(A\ddc u) = f 
\label{loc}
\end{gather}
for some hermitian positive matrix $A$. If $\alpha$ is diagonalised such that $\alpha =  dz ^1 \wedge d\bar{z}^1 \wedge dz^2 \wedge d\bar{z}^2 + \ldots$ then $A = \frac{1}{6} \mathrm{Id}$.\\
\newline
\emph{$C^1$ estimate}: \\
\indent For this we shall not make the assumption that $\al$ is harmonic. This assumption will be used only in the higher order estimates. Following \cite{Blockigradient} let O be a point where $\beta = \ln (\vert \nabla \phi \vert^2) - \gamma (\phi)$ achieves its maximum. (If we prove that $\beta$ is bounded, then so is the first derivative. So assume that $\vert \nabla \phi \vert > 1$ without loss of generality. $\beta$ is Blocki's function. $\gamma$ will be chosen later.) Differentiating once we see that $\det(\ddc u) tr((\ddc u)^{-1} (\ddc u_k) ) - tr(A_{,k} \ddc u) - tr(A\ddc u_k) = f_k$ (and similarly for $\bar{k}$). Let $L$ be the matrix $\det(\ddc u) (\ddc u)^{-1} - A >0$. Hence 
\begin{gather}
tr(L \ddc u_i) = f_i + tr(A_{,i} \ddc u)
\label{diffonce}
\end{gather}
 At O we may assume that $\phi_{i\bar{j}}$ is diagonal. Besides, $\beta _k =0$ there and $tr(L \beta _{k\bar{l}}) \leq 0$ at O. The first condition implies that
\begin{gather}
\frac{1}{\vert \nabla \phi \vert^2}(\sum \phi _{ik} \phi _{\bar{i}}+\phi_i \phi_{\bar{i}k}) -\gamma ^{'} \phi _k = 0 
\label{beta1}
\end{gather}
at O. Moreover,
\begin{gather}
\beta _{k\bar{l}} = -\frac{1}{\vert \nabla \phi \vert ^4} (\sum \phi _{ik} \phi _{\bar{i}}+\phi_i \phi_{\bar{i}k})(\sum \phi _{j\bar{l}}\phi _{\bar{j}} + \phi_j \phi _{\bar{j}\bar{l}}) + \nonumber \\ \frac{1}{\vert \nabla \phi \vert^2}(\sum \phi _{ik\bar{l}}\phi_{\bar{i}} + \phi _{ik}\phi _{\bar{i}\bar{l}} + \phi_{i\bar{l}}\phi_{\bar{i}k}+\phi_i \phi_{\bar{i}k \bar{l}})-\gamma ^{''}\phi_{\bar{l}}\phi_{k} -\gamma ^{'}\phi_{k\bar{l}}
\label{beta2}
\end{gather}
Noticing that $\ddc u_i = \ddc \phi _i$ and using equations \ref{beta1} and \ref{diffonce} we get (at O)
\begin{gather}
0\geq tr(L\beta_{k \bar{l}}) = -((\gamma ^{'})^2 +\gamma ^{''} )tr(L\phi_k \phi_{\bar{l}}) + \frac{1}{\vert \nabla \phi \vert^2}(\sum \phi_{\bar{i}}(f_i+tr(A_{,i}\ddc u)) + \phi _i (f_{\bar{i}}+ tr(A_{,\bar{i}}\ddc u))+ \nonumber \\ tr(L\phi_{ik}\phi_{\bar{i}\bar{l}}) + tr(L\phi_{i\bar{l}}\phi_{\bar{i}k})) -\gamma ^{'} tr(L\ddc u) + \gamma ^{'} tr(L) \nonumber \\
\geq -C_{\gamma, \vert \phi \vert_{C^0}}+tr(-L((\gamma ^{'})^2 +\gamma ^{''})\phi _k \phi _{\bar{l}})-2\gamma ^{'}tr(A\ddc u) -\frac{C}{\vert \nabla \phi \vert}tr(A\ddc u) + \gamma ^{'} \sum \frac{f+tr(A\ddc u)}{u_{i\bar{i}}}
\end{gather}
If we choose $\gamma$ so that $\gamma ^{'}>E>0$,  and $-((\gamma ^{'})^2+\gamma^{''})>Q>0$ (where $E$ and $Q$ are arbitrary positive constants), then this forces $(\ddc u)^{-1} (O)$ to be bounded. For instance $\gamma$ can be chosen \cite{Blockigradient} to be $\gamma (x) = \frac{1}{2}\ln(2x+1)$. Assume that $\vert \nabla \phi \vert \rightarrow \infty$. If $\sum \frac{1}{u_{i\bar{i}}} >2+\epsilon$ uniformly then surely $\Delta u (O)$ is bounded. This observation actually implies that $\Delta u (O)$ is bounded. Indeed,
\begin{lemma}
At any point $Q$ if $\Delta u \rightarrow \infty$, then $\sum \frac{1}{u_{i\bar{i}}} >2+\epsilon$ for some uniform $\epsilon$
\label{inter}
\end{lemma}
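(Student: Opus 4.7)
The plan is to diagonalize $\ddc u$ at $Q$, rewrite equation \ref{loc} and the ellipticity of Lemma \ref{useful} as inequalities on the eigenvalues, and then combine these with the positivity hypothesis $\omega^3 - \alpha \wedge \omega > 0$ via AM-GM. I first pass to a unitary frame at $Q$ that keeps $\omega = \sqrt{-1}\sum dz^i \wedge d\bar{z}^i$ standard and diagonalises $\ddc u$, with ordered eigenvalues $\lambda_1 \leq \lambda_2 \leq \lambda_3$. In this frame equation \ref{loc} becomes
\[
\lambda_1 \lambda_2 \lambda_3 - a_1 \lambda_1 - a_2 \lambda_2 - a_3 \lambda_3 = f,
\]
where the $a_k$ are the (frame-dependent) diagonal entries of the fixed positive Hermitian matrix $A$ appearing in \ref{loc}. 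Since $\operatorname{tr}(A)$ is, up to a normalizing factor, the pointwise ratio $\alpha \wedge \omega / \omega^3$, the hypothesis $\omega^3 - \alpha \wedge \omega > 0$ translates into $\operatorname{tr}(A) < 1$, and hence $a^* := \lambda_{\max}(A) \leq \operatorname{tr}(A) < 1$ uniformly in $Q$ and in $t \in [0,1]$. Lemma \ref{useful} applied at $Q$ furthermore gives the ellipticity inequalities $\lambda_i \lambda_j > a_k$ for every permutation $\{i,j,k\} = \{1,2,3\}$.

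Because the $\lambda_i$ are ordered and $\Delta u(Q) = \sum \lambda_i \to \infty$, we must have $\lambda_3 \to \infty$. I would then split into two cases. \emph{Case A}: along a subsequence $\lambda_1 \to 0$. Then $\sum 1/\lambda_i \geq 1/\lambda_1 \to \infty$ and the conclusion is immediate. \emph{Case B}: $\lambda_1$ stays bounded below by some positive $c_0$. I would first show $\lambda_2$ is then bounded above: if not, $\lambda_1\lambda_2\lambda_3 \geq c_0\lambda_2\lambda_3$ whereas the right-hand side of the equation is $O(\lambda_3)$, and dividing by $\lambda_3$ forces $\lambda_2 = O(1)$, a contradiction. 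With $\lambda_1, \lambda_2$ bounded, the rearrangement
\[
\lambda_3 (\lambda_1 \lambda_2 - a_3) = a_1 \lambda_1 + a_2 \lambda_2 + f
\]
forces $\lambda_1 \lambda_2 \to a_3$ as $\lambda_3 \to \infty$. The AM-GM inequality then yields
\[
\sum_i \frac{1}{\lambda_i} \;\geq\; \frac{\lambda_1 + \lambda_2}{\lambda_1 \lambda_2} \;\geq\; \frac{2}{\sqrt{\lambda_1 \lambda_2}} \;\longrightarrow\; \frac{2}{\sqrt{a_3}} \;\geq\; \frac{2}{\sqrt{a^*}} \;>\; 2,
\]
so $\sum 1/\lambda_i > 2 + \ep$ with $\ep := 2/\sqrt{a^*} - 2 > 0$, uniformly in $Q$ and in $t$.

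The main obstacle is the strict, uniform upper bound $a^* < 1$: this is the only place where the genuine strict positivity $\omega^3 - \alpha \wedge \omega > 0$ (with uniform slack over the torus, coming from compactness and constancy of $\alpha$) really gets used. With only $\omega^3 - \alpha \wedge \omega \geq 0$ the limiting constant $2/\sqrt{a^*}$ would collapse to $2$ and no positive gap $\ep$ could be extracted. A secondary technical point is that the diagonal entries $a_k$ depend on the unitary frame chosen at $Q$, so one must bound them by the frame-invariant quantity $\lambda_{\max}(A)$ in order to keep the eventual $\ep$ uniform in $Q$.
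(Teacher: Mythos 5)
Your proposal is correct and takes essentially the same route as the paper: diagonalise $\ddc u$ at $Q$, read equation \ref{loc} as $\lambda_1\lambda_2\lambda_3 = f + \sum A_{ii}\lambda_i$, observe that when the largest eigenvalue blows up the product of the remaining two tends to a diagonal entry of $A$, which is uniformly less than $1$ because $\omega^3 - \alpha\wedge\omega > 0$, and conclude by AM--GM that $\sum 1/\lambda_i \geq 2/\sqrt{A_{kk}} > 2+\epsilon$. The only cosmetic differences are your opposite ordering of the eigenvalues and your bound $a_k \leq \lambda_{\max}(A) \leq \mathrm{tr}(A) < 1$, which the paper phrases as $A_{ii} < 1-\tilde{\ep}$.
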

\begin{proof}
Choose normal coordinates for $\omega$ around $Q$ so that $\ddc u$ is diagonal at $Q$. Recall that $\omega ^3 - \al \wedge \omega > \tilde{\ep}\omega ^3 $ forces $A_{ii} < 1-\tilde{\ep}$. Let $u_{i\bar{i}} (Q)=\lambda_i$ with $\lambda _1 \geq \lambda _2 \geq \lambda_3 \geq C>0$. (If $\lambda _3$ gets arbitrarily close to $0$, then the lemma is obviously true.) If $\lambda _1 \rightarrow \infty$ it is clear from the equation $\lambda _1 \lambda _2 \lambda _3 = f + \sum A_{ii} \lambda _i$ that $\lambda _3$ should be bounded. Solving for $\lambda_1$, one can see that $\lambda _2 \rightarrow \frac{A_{11}}{\lambda _3}$. This means that $\sum \frac{1}{\lambda _i}$ goes to $\frac{1}{\lambda _3} + \frac{\lambda _3}{A_{11}} \geq 2 (1/A_{11})^{1/2} > 2+\epsilon$. 
\end{proof}

Lemma \ref{inter} implies that $L$ is bounded below and above at O. This means that $\nabla \phi$ is bounded at O. \\
\newline 
\emph{$C^{1,1}$ estimate} : \\ 
\indent Define $g = \frac{\alpha \wedge \omega _{\phi}}{\omega^3} - \phi$. Locally $g=\mathrm{tr}(A\ddc u) - \phi$. If $g$ is bounded, then thanks to the previous $C^0$ estimate on $\phi$, so is $\mathrm{tr}(A\ddc u)$. This will give us the desired bound on $\Delta \phi$ and hence on $\ddc \phi$, i.e. the $C^{1,1}$ estimate. \\
\indent Differentiating equation \ref{loc} we see that
\begin{gather}
\mathrm{tr}\left ( (\det(\ddc u) (\ddc u)^{-1} - A) \ddc u_k\right ) = f_k \nonumber \\
\Rightarrow \mathrm{tr}(L\ddc u_k) = f_k \nonumber 
\label{locdiffone}
\end{gather}
where the matrix $L=\det(\ddc u) (\ddc u)^{-1} - A > 0$ as before. Differentiating again and taking the trace \footnote{In whatever follows, upper indices do not denote the inverse matrix. They just denote the original matrix itself and are used to make the Einstein summation convention work nicely.} after multiplication with  $A$ we see that
\begin{gather}
A^{k\bar{l}}\mathrm{tr}\left ( (\det(\ddc u) (\ddc u)^{-1} - A) \ddc u_{k\bar{l}}\right ) = A^{k\bar{l}}f_{k\bar{l}} + \det(\ddc u)   A^{k\bar{l}}\mathrm{tr}\left ( (\ddc u)^{-1}\ddc u_{\bar{l}} (\ddc u)^{-1} \ddc u_k  \right )  \nonumber \\ - \det(\ddc u)A^{k\bar{l}} \mathrm{tr} \left ( (\ddc u)^{-1} \ddc u_{\bar{l}} \right ) \left ( (\ddc u)^{-1} \ddc u_{k} \right ) \nonumber \\
\Rightarrow A^{k\bar{l}}\mathrm{tr}\left ( L \ddc u_{k\bar{l}}\right ) = A^{k\bar{l}}f_{k\bar{l}} + \det(\ddc u)   A^{k\bar{l}}\mathrm{tr}\left ( (\ddc u)^{-1}\ddc u_{\bar{l}} (\ddc u)^{-1} \ddc u_k  \right )  \nonumber \\ - \det(\ddc u)A^{k\bar{l}} \mathrm{tr} \left ( (\ddc u)^{-1} \ddc u_{\bar{l}} \right ) \mathrm{tr}\left ( (\ddc u)^{-1} \ddc u_{k} \right ) 
\label{locdiff}
\end{gather}
 Upon differentiating $g$ we see that 
\begin{gather}
g_k = \mathrm{tr}(A \ddc u_k) - \phi _k\\
g_{k\bar{l}} =  \mathrm{tr}(A \ddc u_{k\bar{l}}) - \phi _{k\bar{l}}
\label{gdiff}
\end{gather}
Let us assume that $g$ attains its maximum at a point $P$. At $P$, $g_k=0$, $u=\phi$, $u_k = \phi _k$, $u_{k\bar{l}} = \phi_{k \bar{l}} + \delta_{k\bar{l}}$, and $\mathrm{tr}(L[g_{k\bar{l}}])=L^{k\bar{l}}g_{k\bar{l}}\leq 0$. Choose normal coordinates for $\omega$ around $P$ so that $\ddc u$ is diagonal at $P$. Putting these observations, and equations \ref{locdiffone}, \ref{locdiff} and \ref{gdiff} together we see that at $P$ (all the arbitrary constants that occur below are positive by convention)
\begin{gather}
0\geq -L^{k\bar{l}} \phi _{k \bar{l}} + A^{k\bar{l}}f_{k\bar{l}} + \det(\ddc u)   A^{k\bar{l}}\mathrm{tr}\left ( (\ddc u)^{-1}\ddc u_{\bar{l}} (\ddc u)^{-1} \ddc u_k  \right )  \nonumber \\ - \det(\ddc u)A^{k\bar{l}} \mathrm{tr} \left ( (\ddc u)^{-1} \ddc u_{\bar{l}} \right ) \mathrm{tr} \left ( (\ddc u)^{-1} \ddc u_{k} \right ) \\
\geq -L^{k\bar{l}} u _{k \bar{l}}+\mathrm{tr}(L) + A^{k\bar{l}} f_{k\bar{l}} - \det(\ddc u)A^{k\bar{l}} \mathrm{tr} \left ( (\ddc u)^{-1} \ddc u_{\bar{l}} \right ) \mathrm{tr} \left ( (\ddc u)^{-1} \ddc u_{k} \right ) \\
\geq -3\det(\ddc u)+ \mathrm{tr}(L)+ A^{k\bar{l}}u_{k\bar{l}} - C - A^{k\bar{l}}\frac{(f_k + \mathrm{tr}(A \ddc u_k))(f_l + \mathrm{tr}(A \ddc u_l))}{\det(\ddc u)} \nonumber \\
\geq -2 A^{k\bar{l}}u_{k\bar{l}} + \mathrm{tr}(L) - C - A^{k\bar{l}}\frac{(f_k + u_k)(f_l + u_l)}{f+\mathrm{tr}(A \ddc u)} \nonumber \\
\geq -2 A^{k\bar{l}}u_{k\bar{l}} + \det(\ddc u) \mathrm{tr}((\ddc u )^{-1}) - C_1 - \frac{C_2}{f+\mathrm{tr}(A \ddc u)} \nonumber \\
= -2 A^{k\bar{l}}u_{k\bar{l}} + (f+\mathrm{tr}(A\ddc u)) \mathrm{tr}((\ddc u )^{-1}) - C_1 - \frac{C_2}{f+\mathrm{tr}(A \ddc u)}
\label{finaldiff}
\end{gather}
Let $u_{l\bar{l}}$ at $P$ be $\lambda _l$. Thus at $P$
\begin{gather}
0 \geq -2 \displaystyle \sum _{l=1} ^{3} A_{l \bar{l}}\lambda _l +  \sum _{l=1} ^{3} A_{l \bar{l}}\lambda _l \sum_{k=1} ^{3} \frac{1}{\lambda _k} - C_1 - \frac{C_2}{f+\mathrm{tr}(A \ddc u)}  \nonumber \\
= \displaystyle \left ( \sum_{k=1} ^{3} \frac{1}{\lambda _k} - 2\right ) \sum _{l=1} ^{3} A_{l \bar{l}}\lambda _l - C_1 - \frac{C_2}{f+\mathrm{tr}(A \ddc u)}  
\label{finaldifftwo}
\end{gather}
Using lemma \ref{inter} we see that if $\Delta u \rightarrow \infty$ at $P$, then 
\begin{gather}
0 \geq \displaystyle \epsilon \sum _{l=1} ^{3} A_{l \bar{l}}\lambda _l - C_1 - \frac{C_2}{f+\mathrm{tr}(A \ddc u)}
\label{finaldiffthree}
\end{gather}
It is clear from equation \ref{finaldiffthree} that $\mathrm{tr}(A \ddc u)$ is bounded at $P$ and hence so is $g$. As mentioned earlier this implies the desired $C^{1,1}$ estimate. \\
\newline 
\emph{$C^{2,\be}$ estimate}: \\
\indent Rewriting the equation (just as in \cite{GenMA}) $-1=-\frac{\eta}{(\omega + \ddc \phi)^3} - \frac{\al \wedge (\omega + \ddc \phi)}{(\omega + \ddc \phi)^3}$ and using lemma \ref{EK} we see that the (complex version \cite{Blocki}\cite{Siu} of) Evans-Krylov theory applies to it. This proves the desired estimate. \\
\newline
\subsection{Proof of theorem \ref{ChernWeil}}  
The curvature $\Theta (h) = \Theta _0 + \ddc \phi$. Hence $\mathrm{tr}((\Theta _0 + \ddc \phi)^n)=\eta$. This equation reduces in the two cases of the theorem to
\begin{gather}
(\ddc \phi + \frac{\mathrm{tr}(\Theta _0)}{k})^2 = \frac{\eta-\mathrm{tr}((\Theta _0)^2)}{k}+\frac{(\mathrm{tr}(\Theta _0))^2}{k^2} \\
(\ddc \phi + \frac{\mathrm{tr}(\Theta _0)}{k})^3 - (\ddc \phi + \frac{\mathrm{tr}(\Theta _0)}{k}) \wedge (\frac{-3\mathrm{tr}(\Theta _0 ^2)}{k}+3\frac{(\mathrm{tr}(\Theta _0))^2}{k^2})  = \nonumber \\
  \frac{\eta - \mathrm{tr}(\Theta _0 ^3)}{k}-\frac{2(\mathrm{tr}(\Theta _0))^3-3k\mathrm{tr}(\Theta _0) \wedge \mathrm{tr}(\Theta _0 ^2)}{k^3}
\end{gather}
respectively. The first equation may be solved under the given hypotheses using Aubin-Yau's solution \cite{Yau}\cite{Aub} of the Calabi conjecture \cite{Calabi}. The second one is solved using theorem \ref{Main}.


\begin{thebibliography}{99}


\bibitem {Aub} T. Aubin, M\' etriques Riemanniennes et courbure. \emph{J. Diff. Geom}, $1970$.
\bibitem {Blocki} Z. Blocki, The Calabi-Yau theorem. \emph{Lecture Notes in Mathematics 2038}, $2005$ (also on http://gamma.im.uj.edu.pl/~blocki/publ/). 
\bibitem{Blockigradient}  Z. Blocki, A gradient estimate in the Calabi-Yau theorem, \emph{Math. Ann.}, $2009$.
\bibitem {Calabi} E. Calabi, The space of K\"ahler metrics. \emph{Proc. Intern. Congress Math. Amsterdam}, $1954$.
\bibitem{Datta} M. Datta, Universal property of Chern character forms of the canonical connection, \emph{Geo. Func. Anal.}, $2009$.
\bibitem{Hess2} S. Dinew, K. Slawomir, A priori estimates for the complex Hessian equations. arXiv:1112.3063 $2011$.

\bibitem {Hess1} Z. Hou, X.N. Ma, D. Wu, A second order estimate for complex Hessian equations on a compact K\"ahler manifold. \emph{Math. Res. Lett.}, $2010$. 

\bibitem{InvHess} F. Hao, M. Lai, X. Ma, On a class of fully nonlinear flows in K\"ahler geometry. \emph{Jou. Rein. ang. (Crelle's Journal)} $2011$.
\bibitem{Kryl} N.V. Krylov, On a general notion of fully nonlinear second-order elliptic equations, \emph{Trans. Am. Math. Soc}, $1995$.
\bibitem{GenMA} V. Pingali, A generalised Monge-Amp\`ere equation. arXiv: 1205.1266

\bibitem {Siu} Y.T. Siu, Lectures on Hermitian-Einstein metrics for stable bundles and K\"ahler-Einstein metrics. 
\emph{Birkh\"auser}, $1987$.  
\bibitem{XX} X.X. Chen, On the lower bound of the Mabuchi energy and its application, \emph{Int. Math. Res. Notices}, $2000$.
\bibitem {Yau} S.T. Yau, On the Ricci curvature of a compact k\"ahler manifold and the complex monge-amp\`ere equation, I. \emph{Comm. Pure and App. Math.}, $1978$.
\end{thebibliography}
\end{document}